\newtheorem{theorem}{Theorem}[section]
\newtheorem{proposition}[theorem]{Proposition}
\newtheorem{corollary}[theorem]{Corollary}
\theoremstyle{definition}
\newtheorem{remark}[theorem]{Remark}
\newcommand{\Rep}{\text{Rep}}
\newcommand{\g}{\mathfrak{g}}
\newcommand{\h}{\mathfrak{h}}
\newcommand{\ben}{\begin{enumerate}}
\newcommand{\een}{\end{enumerate}}
\theoremstyle{plain}
\newtheorem*{sol}{Solution}
\theoremstyle{definition}
\theoremstyle{remark}
\newcommand{\solu}[1]{\begin{sol}{\bf (\ref{#1})}}
\def\g{\mathfrak{g}}
\def\h{\mathfrak{h}}
\def\Rep{\mathop{\mathrm{Rep}}\nolimits}
\begin{document}

\title{Mittag-Leffler type sums associated with root systems}

\begin{abstract}This is a largely expository note which applies standard techniques of the theory of Duijstermaat-Heckman measures for compact Lie groups and results of P. Littelmann to prove a generalization of a conjecture of Coquereaux and Zuber.
\end{abstract}

\author{Pavel Etingof}
\address{Department of Mathematics, Massachusetts Institute of Technology,
Cambridge, MA 02139, USA}
\email{etingof@math.mit.edu}

\author{Eric Rains}
\address{Department of Mathematics, California Institute of Technology,
Pasadena, CA 91125, USA}
\email{rains@caltech.edu}

\maketitle 

\section{The main theorem}

Let $G$ be a simply connected simple algebraic group over $\Bbb C$. Let $\g={\rm Lie}G$, $\h\subset \g$ be a Cartan subalgebra, $W$ the Weyl group, $Q,P,Q^\vee,P^\vee$ the root, weight, coroot and coweight lattices, $\rho\in \h^*$ the half-sum of positive roots, $P_+\subset P$ the set of dominant integral weights, $V_\lambda$ the irreducible representation of $G$ with highest weight $\lambda\in P_+$, and $k$ a positive integer. Consider the function on $\h$ given by 
$$
f(x)=\prod_{\alpha>0}\frac{\sin\pi \alpha(x)}{\pi\alpha(x)},
$$
where the product is taken over positive roots of $G$. 

Let $Z\subset G$ be the center and $\xi: Z\to \Bbb C^*$ a character. Since $Z=P^\vee/Q^\vee$, we may view $\xi$ as a character of $P^\vee$ which is trivial on $Q^\vee$. Define the function 
$$
F_{k,\xi}(x):=\sum_{a\in P^\vee}\xi(a)f^k(x+a).
$$
(If $G=SL_2$ and $k=1$ then the sum is not absolutely convergent, and should be understood in the sense of principal value). Thus, the meromorphic function 
$$
M_{k,\xi}(x):={\frac{F_{k,\xi}(x)}{\prod_{\alpha>0}\pi^{-k}\sin^k \pi\alpha(x)}}
$$ 
has a Mittag-Leffler type decomposition 
$$
M_{k,\xi}(x)=\sum_{a\in P^\vee}\frac{(-1)^{2k\rho(a)}\xi(a)}{\prod_{\alpha>0}\alpha^k(x)}.
$$
For example, for $G=SL_2$, we have 
$$
F_{1,1}(x)=1, F_{1,-1}(x)=\cos \pi x, 
$$
which gives the classical Mittag-Leffler decompositions 
$$
\pi\cot \pi x=\sum_{n\in \Bbb Z}\frac{1}{x+n},\ \frac{\pi}{\sin \pi x}=\sum_{n\in \Bbb Z}\frac{(-1)^n}{x+n}. 
$$

The goal of this note is to prove the following theorem. 

\begin{theorem}\label{maint} 
The function $F_{k,\xi}$ is a $W$-invariant trigonometric polynomial on the maximal torus $T=\h/Q^\vee$
of $G$, which is a nonnegative rational linear combination of irreducible characters of $G$. 
\end{theorem}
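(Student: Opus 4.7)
The plan is to combine Poisson summation on the lattice $P^\vee$, the Kirillov--Harish-Chandra identification of $f$ with the Fourier transform of the Duistermaat--Heckman measure on the coadjoint orbit $\mathcal{O}_\rho$, and a Heckman/Littelmann interpretation of the resulting Weyl-alternating sums as nonnegative symplectic multiplicities.

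I would begin by verifying directly from the definitions that $F_{k,\xi}$ descends to $T=\mathfrak{h}/Q^\vee$ and is $W$-invariant: $W$-invariance of $f$ is immediate from the symmetric product over positive roots; $\xi$ is $W$-equivariant because $W$ acts trivially on the center $Z=P^\vee/Q^\vee$; and the triviality of $\xi$ on $Q^\vee$ yields $Q^\vee$-periodicity. To upgrade this to the \emph{finite} Fourier sum needed for polynomiality, I would apply Poisson summation on $P^\vee$. Writing $\xi(a)=e^{2\pi i\tilde\xi(a)}$ for a lift $\tilde\xi\in P$ and using orthogonality of characters on $Z$ to pick out the appropriate coset, one obtains
$$F_{k,\xi}(x)=\frac{|Z|}{\mathrm{vol}(\mathfrak{h}/Q^\vee)}\sum_{\nu\in Q-\tilde\xi}\widetilde{f^k}(\nu)\,e^{2\pi i\nu(x)},$$
where $\widetilde{f^k}(\nu)=\int_{\mathfrak{h}}f^k(x)e^{-2\pi i\nu(x)}dx$.

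The next step is to identify $\widetilde{f^k}$ geometrically. By the Kirillov character formula, $f$ equals, up to a fixed scalar, the Fourier transform of the Duistermaat--Heckman measure $\mu_\rho$ (the pushforward to $\mathfrak{h}^*$ of the Liouville measure on $\mathcal{O}_\rho$). Hence $\widetilde{f^k}$ is proportional to the density $h_\rho^{*k}$ of the $k$-fold convolution $\mu_\rho^{*k}$, which has compact support contained in $k\cdot\mathrm{conv}(W\rho)$. This gives finiteness of the Fourier sum, so $F_{k,\xi}$ is a trigonometric polynomial on $T$. Multiplying through by the Weyl denominator and comparing coefficients at the strictly dominant weight $\lambda+\rho$ extracts the character multiplicities as
$$m_\lambda=\frac{|Z|}{\mathrm{vol}(\mathfrak{h}/Q^\vee)}\sum_{w\in W}\epsilon(w)\,h_\rho^{*k}(\lambda+\rho-w\rho),\qquad\lambda\in P_+.$$
Rationality follows because $h_\rho^{*k}$ is piecewise polynomial with rational coefficients (its breakpoints lie in the rational hyperplane arrangement generated by $W\rho$), so its values at points of $P$ are rational.

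The main obstacle is the nonnegativity of $m_\lambda$. I would interpret the alternating sum above as Heckman's formula for the classical multiplicity function of the Hamiltonian $G$-space $M=\mathcal{O}_\rho^k$ (with its diagonal $G$-action and sum-of-moments map); its $T$-moment pushforward is precisely $\mu_\rho^{*k}$, and the alternating Weyl sum is the standard antisymmetrization converting the $T$-density to the $G$-density. By Duistermaat--Heckman/Kirwan theory, this equals, up to a positive constant, the symplectic volume of the reduced space $M//_\lambda G$, which is nonnegative by construction. Littelmann's path model is what makes this concrete and combinatorial: it computes $h_\rho^{*k}$ at lattice points, and hence the alternating sum, in terms of Lakshmibai--Seshadri paths of shape $\rho$ concatenated $k$ times with endpoint $\lambda+\rho$, making nonnegativity manifest and controlling the denominators of $m_\lambda$. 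The most delicate step in this argument will be setting up the identification precisely enough to handle the boundary cases where several Weyl elements contribute to the same point $\lambda+\rho-w\rho$.
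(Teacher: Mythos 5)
Your reduction steps reproduce the paper's skeleton and are sound: descent to $T$ and $W$-invariance, Poisson summation over $P^\vee$ identifying the Fourier coefficients of $F_{k,\xi}$ with the values of the density $h_\rho^{*k}$ of $(DH_\rho)^{*k}$ on a coset of $Q$, compact support giving polynomiality, and rationality from piecewise polynomiality with rational data. The gap is in the nonnegativity step. What you must show is that the finite Weyl-alternating sum $m_\lambda=c\sum_{w}(-1)^w h_\rho^{*k}(\lambda+\rho-w\rho)$ is nonnegative, and you assert that Duistermaat--Heckman/Kirwan theory identifies it, up to a positive constant, with $\mathrm{vol}\bigl((\mathcal{O}_\rho^{\,k})/\!/_\lambda G\bigr)$. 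That identification is false: the passage from the $T$-Duistermaat--Heckman density $h^T$ of a Hamiltonian $G$-space to the reduced-space volumes is effected by the differential operator $\prod_{\alpha>0}\partial_\alpha$ (an infinitesimal antisymmetrization), not by the finite-difference operator $\sum_w(-1)^w T_{\rho-w\rho}$, and the two genuinely differ. Concretely, for $G=SL_2$ and $k=5$ write $h_\rho^{*5}=2^{-5}B$ with $B=\mathbf{1}_{[-1,1]}^{*5}$; then $m_\lambda\propto B(\lambda)-B(\lambda+2)$ while the reduced-space volume is proportional to $-B'(\lambda)$ (possibly further divided by the orbit volume $\lambda$). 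One computes $B(1)-B(3)=20/3$ and $-B'(1)=4$, but $B(3)-B(5)=2/3$ and $-B'(3)=4/3$: the ratio is $5/3$ at $\lambda=1$ and $1/2$ at $\lambda=3$, so no single positive constant works. Hence nonnegativity of $m_\lambda$ does not follow from nonnegativity of symplectic volumes; it is precisely the nontrivial content of the theorem.

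What is missing is a ``quantization'' input showing that the trigonometric polynomial obtained by \emph{sampling} the continuous density $h_\rho^{*k}$ at lattice points is a limit of genuine characters. The paper supplies this via Littelmann's contraction principle (Proposition \ref{p1}): for $V=V_{N\rho}^{\otimes k}$ the function $\chi_{V,N}(x)=\sum_\lambda\dim V[N\lambda]e^{2\pi i\lambda(x)}$ is itself the character of a representation of $G$, with the multiplicity of $\chi_\lambda$ equal to that of $V_{N\lambda+(N-1)\rho}$ in $V\otimes V_{(N-1)\rho}$; dividing by $N^{k|R_+|}$ and letting $N\to\infty$ (using the asymptotics $\dim V_{N\rho}^{\otimes k}[N\mu]\sim N^{k|R_+|}h_\rho^{*k}(\mu)$) exhibits the sampled density as a limit of characters with nonnegative coefficients. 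Your appeal to the path model ``computing $h_\rho^{*k}$ at lattice points'' does not substitute for this: Lakshmibai--Seshadri paths compute integer weight and tensor-product multiplicities of honest representations, and the positivity they provide does not by itself apply to the Racah-type alternating sum of values of a continuous density. If you replace the reduced-space argument by the contraction principle (or any other proof that $m_\lambda$ is a limit of honest multiplicities), the rest of your argument goes through.
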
 

For $G=SL_n$ and $\xi$ being a character of order $2$, this theorem was conjectured by R. Coquereaux and J.-B. Zuber (\cite{CZ}, Conjecture 1 in Subsection 2.2). 

Since all the techniques and ideas we use are well known, this note should be viewed as largely expository.  

\section{Proof of the main theorem} 

\subsection {Contraction of representations} 
We start with the following general fact.  

\begin{proposition}\label{p1}(\cite{Li1}, Proposition 3) Let $(V,\rho_V)$ be a rational representation of $G$, and $N$ a positive integer. Let $V_N$ be the direct sum of all the weight subspaces of $V$ of weights divisible by $N$. Then the action of $T$ on $V_N$ given by $t\circ v:=\rho_V(t^{1/N})v$ extends to an action of $G$.\footnote{Note that $\rho_V(t^{1/N})v$ is independent on the choice of the $N$-th root $t^{1/N}$.} In other words, the function 
$$
\chi_{V,N}(x):=\sum_{\lambda\in P}\dim V[N\lambda]e^{2\pi i\lambda(x)}
$$
is a nonnegative linear combination of irreducible characters of $G$. Namely, the multiplicity of 
$\chi_\lambda$ in $\chi_{V,N}(x)$ equals the multiplicity of 
$V_{N\lambda+(N-1)\rho}$ in $V\otimes V_{(N-1)\rho}$. 
\end{proposition}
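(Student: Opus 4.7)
My plan is to reduce to the tensor product decomposition of $V\otimes V_{(N-1)\rho}$ via the Weyl character formula. The first step is to observe that $\chi_{V,N}(x)=\sum_{\lambda\in P}\dim V[N\lambda]\,e^{2\pi i\lambda(x)}$ is a finite $W$-invariant trigonometric polynomial with exponents in $P$: the $W$-invariance follows from $\dim V[w\mu]=\dim V[\mu]$, and the exponents lie in $P$ since the weights of $V$ do. Consequently $\chi_{V,N}=\sum_{\lambda\in P_+}c_\lambda\chi_\lambda$ with $c_\lambda\in\Z$, so the task reduces to computing $c_\lambda$ and showing it is nonnegative.

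To extract $c_\lambda$, I would multiply through by the Weyl denominator $A_\rho(x):=\sum_{w\in W}(-1)^we^{2\pi iw\rho(x)}$ and read off the coefficient of $e^{2\pi i(\lambda+\rho)(x)}$; since $\lambda+\rho$ is strictly dominant, only the term $c_\lambda A(\lambda+\rho)$ contributes, and a direct expansion yields
$$c_\lambda=\sum_{w\in W}(-1)^w\dim V\bigl[N(\lambda+\rho)-Nw\rho\bigr].$$
Next I would invoke the Brauer/Klimyk tensor product multiplicity formula
$$[U\otimes V_\mu:V_\nu]=\sum_{w\in W}(-1)^w\dim U\bigl[\nu+\rho-w(\mu+\rho)\bigr],$$
valid for any finite-dimensional $G$-module $U$ and dominant $\mu,\nu$, and specialize to $U=V$, $\mu=(N-1)\rho$, $\nu=N\lambda+(N-1)\rho$. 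Then $\nu+\rho=N(\lambda+\rho)$ and $\mu+\rho=N\rho$, so the right-hand side becomes exactly the expression for $c_\lambda$ above. This identifies $c_\lambda=[V\otimes V_{(N-1)\rho}:V_{N\lambda+(N-1)\rho}]$, a genuine tensor product multiplicity, hence a nonnegative integer.

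The existence of the extended $G$-action on $V_N$ then follows formally: the $G$-module $M:=\bigoplus_\lambda V_\lambda^{\oplus c_\lambda}$ has the same $T$-character as $V_N$ equipped with the rescaled torus action, and any $T$-equivariant isomorphism $V_N\xrightarrow{\sim}M$ transports the $G$-structure. I expect the main technical care to lie in matching the weight shifts between the Weyl denominator expansion and Brauer's formula; the choice $\mu=(N-1)\rho$ is forced by the requirement $\mu+\rho=N\rho$, which is precisely what aligns the $N$-scalings on the two sides.
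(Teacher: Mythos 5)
Your argument is correct, and it reaches the paper's conclusion by a genuinely different (purely algebraic) route. The paper computes the multiplicity as an orthogonality integral over the torus, substitutes the Fourier expansion of $\chi_V$ to produce a delta function $\delta(x-Ny)$, and then uses the Weyl character formula together with the identity $\Delta(Ny)/\Delta(y)=\chi_{(N-1)\rho}(y)$ to recognize the integral as $\langle \chi_V\chi_{(N-1)\rho},\,\chi_{N\lambda+(N-1)\rho}\rangle$. You instead extract the coefficient $c_\lambda$ by multiplying by the Weyl denominator and reading off the coefficient of $e^{2\pi i(\lambda+\rho)(x)}$, obtaining the alternating sum $\sum_{w\in W}(-1)^w\dim V[N(\lambda+\rho)-Nw\rho]$, and then match this against the Brauer--Klimyk formula specialized to $\mu=(N-1)\rho$, $\nu=N\lambda+(N-1)\rho$; I checked the weight bookkeeping and it is right. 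The two computations are two faces of the same fact --- your alignment $\mu+\rho=N\rho$ is exactly the paper's use of $\Delta(Ny)=A_{N\rho}(y)$ --- but yours avoids integrals and distributional manipulations entirely, at the cost of invoking Brauer--Klimyk as a known lemma (itself proved by the same denominator trick), whereas the paper's analytic form of the argument is the one that degenerates naturally into the Duistermaat--Heckman limit used in the rest of the note. Your closing observation, that the $G$-action on $V_N$ is obtained by transporting the module structure through any $T$-equivariant isomorphism with $\bigoplus_\lambda V_\lambda^{\oplus c_\lambda}$, is the implicit content of the paper's ``in other words'' and is a reasonable thing to make explicit.
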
 

\begin{proof} Littelmann proves this proposition via his path model as an illustration of its use, 
but we give a more classical proof using the Weyl character formula. We have to show that the integral
$$
I:=\int_{\h_\Bbb R/Q^\vee}\sum_{\lambda\in P}\dim V[N\lambda]e^{-2\pi i\lambda(x)}\chi_\lambda(x)|\Delta(x)|^2dx
$$
is nonnegative, where $\Delta(x)$ is the Weyl denominator, since the multiplicity in question is $I/|W|$. 

Denoting the character of $V$ by $\chi_V$, we have 
$$
I=\int_{\h_\Bbb R/Q^\vee}\int_{\h_\Bbb R/Q^\vee}\sum_{\lambda\in P}\overline{\chi_V(y)}e^{2\pi iN\lambda(y)}e^{-2\pi i\lambda(x)}\chi_\lambda(x)|\Delta(x)|^2dydx=
$$
$$
\int_{\h_\Bbb R/Q^\vee}\int_{\h_\Bbb R/Q^\vee}\overline{\chi_V(y)}\delta(x-Ny)\chi_\lambda(x)|\Delta(x)|^2dydx=
$$
$$
\int_{\h_\Bbb R/Q^\vee}\overline{\chi_V(y)}\chi_\lambda(Ny)|\Delta(Ny)|^2dy.
$$
Using the Weyl character formula, we then have 
$$
I=\int_{\h_\Bbb R/Q^\vee}\overline{\chi_V(y)}\left(\sum_{w\in W}(-1)^we^{2\pi i(w(\lambda+\rho),Ny)}\right)\overline{\Delta(Ny)}dy=
$$
$$
\int_{\h_\Bbb R/Q^\vee}\overline{\chi_V(y)}\frac{\sum_{w\in W}(-1)^we^{2\pi i(w(\lambda+\rho),Ny)}}{\Delta(y)}\frac{\overline{\Delta(Ny)}}{\overline{\Delta(y)}}|\Delta(y)|^2dy=
$$
$$
\int_{\h_\Bbb R/Q^\vee}\overline{\chi_V(y)}\chi_{N\lambda+(N-1)\rho}(y)\frac{\overline{\Delta(Ny)}}{\overline{\Delta(y)}}|\Delta(y)|^2dy.
$$
Now recall that $\frac{\Delta(Ny)}{\Delta(y)}=\chi_{(N-1)\rho}(y)$. Thus we get 
$$
I=\int_{\h_\Bbb R/Q^\vee}\overline{\chi_V(y)\chi_{(N-1)\rho}(y)}\chi_{N\lambda+(N-1)\rho}(y)|\Delta(y)|^2dy,
$$
i.e., $I/|W|$ is the multiplicity of $V_{N\lambda+(N-1)\rho}$ in $V\otimes V_{(N-1)\rho}$, as desired. 
\end{proof} 

\begin{remark} 1. If $N$ is odd (and coprime to $3$ for $G$ of type G2), then 
Proposition \ref{p1} has a nice representation-theoretic interpretation. Namely, 
if $q$ is a root of unity of order $N$ and $G_q$ the corresponding Lusztig quantum group, 
then there is an exact {\it contraction functor} $F: \Rep G_q\to \Rep G$ which 
at the level of $P$-graded vector spaces transforms $V$ into $V_N$ with weights divided by $N$ 
(see \cite{GK} and references therein). Proposition \ref{p1} is then obtained by applying 
the functor $F$ to a Weyl module. 

2. Suppose that $G$ is not simply laced. Normalize the inner product on $\h^*$ so that long roots 
have squared length $2$. This inner product identifies $\h$ with $\h^*$ so that $\alpha_i^\vee$ map to $2\alpha_i/(\alpha_i,\alpha_i)$. Note that $2/(\alpha_i,\alpha_i)$ is an integer, so under this identification $Q^\vee\subset Q$, hence $P^\vee\subset P$. Let $V_N'\subset V_N$ be the span of the weight subspaces of $V$ of weights belonging to $NP^\vee$ with weights divided by $N$. Then, analogously to Proposition \ref{p1}, $V_N'$ extends to a representation of the Langlands dual Lie algebra $\g^L$, with a similar descrition of multiplicities (\cite{Li1}, Proposition 4). Note that this statement is nontrivial even if $\g^L\cong \g$, since the arrow on the Dynkin diagram is reversed. This also has a representation-theoretic 
interpretation similar to (1), see \cite{Li1}, Section 3, \cite{GK}.

3. As explained in \cite{Li1}, 
Proposition \ref{p1} generalizes to symmetrizable Kac-Moody algebras (both our proof and that of \cite{Li1} can 
be straightforwardly extended to this case). So does the non-simply laced version of Proposition \ref{p1} given in (2) and the above representation-theoretic interpretations, see \cite{Li2}.
\end{remark}

\subsection{Duistermaat-Heckman measures and proof of Theorem \ref{maint}} 
Now recall (\cite{GLS}) that for each dominant $\lambda\in \h_{\Bbb R}^*$ we can define the Duistermaat-Heckman measure $DH_\lambda(\mu)d\mu$ on $\h_{\Bbb R}^*$, which is the direct image of the Liouville measure on the coadjoint orbit of $\lambda$. This measure is supported on the convex hull of the Weyl group orbit $W\lambda$, and its Fourier transform is given by the formula 
\begin{equation}\label{eq1}
{\mathcal F}(DH_\lambda)(x)=\frac{\sum_{w\in W}(-1)^we^{2\pi i(w\lambda,x)}}{\prod_{\alpha>0}2\pi i\alpha(x)}. 
\end{equation} 

For simplicity assume that $\lambda$ is regular and $G\ne SL_2$. Then $DH_\lambda$ is absolutely continuous with respect to the Lebesgue measure (i.e., the density function $DH_\lambda(\mu)$ is continuous). 
Then it is known (\cite{GLS}) that if $\mu_N\in P,\lambda_N\in P_+$ are sequences such that $\mu_N/N\to \mu, \lambda_N/N\to \lambda$ as $N\to \infty$ and $\lambda_N-\mu_N\in Q$ then 
\begin{equation}\label{eq2}
\lim_{N\to \infty}\frac{\dim V_{\lambda_N}[\mu_N]}{N^{|R_+|}}=DH_\lambda(\mu), 
\end{equation}
where $R_+$ is the set of positive roots. 
Note that equation \eqref{eq1} follows immediately from equation \eqref{eq2} and the Weyl character formula. 

\begin{proposition}\label{p2} Let $\lambda_1,...,\lambda_k\in \h_{\Bbb R}^*$ be regular dominant weights. Then the trigonometric polynomial
$$
\sum_{\mu\in P}(DH_{\lambda_1}*...*DH_{\lambda_k})(\mu)e^{2\pi i\mu(x)}
$$
(where $*$ denotes convolution of measures) is a linear combination of irreducible characters of $G$ with nonnegative real coefficients. 
\end{proposition}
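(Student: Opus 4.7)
The plan is to realize the target trigonometric polynomial
$$
P(x) := \sum_{\mu\in P}(DH_{\lambda_1}*\cdots*DH_{\lambda_k})(\mu)\, e^{2\pi i\mu(x)}
$$
as a suitably rescaled limit of a sequence of manifestly positive characters produced by applying Proposition \ref{p1} to tensor products of Weyl modules, and to transfer nonnegativity to the limit via character orthogonality.

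Concretely, choose for each positive integer $N$ dominant weights $\nu_i^{(N)}\in P_+$ with $\nu_i^{(N)}/N\to\lambda_i$ (one may take $\nu_i^{(N)}=N\lambda_i$ when $\lambda_i\in P_+$, or rational approximations otherwise). Apply Proposition \ref{p1} with divisibility $N$ to the tensor product $V^{(N)} := V_{\nu_1^{(N)}}\otimes\cdots\otimes V_{\nu_k^{(N)}}$; this yields that
$$
\chi_N(x) := \sum_{\mu\in P}\dim V^{(N)}[N\mu]\, e^{2\pi i\mu(x)}
$$
is a nonnegative integer linear combination of irreducible characters of $G$. Setting $r = \dim_\R\h^*_\R$ and $s := k|R_+| + (k-1)r$, the claim is that $\chi_N/N^s \to C\cdot P$ uniformly on the maximal torus $T$, for a positive constant $C$ depending only on the covolume of $P$. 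Indeed, expanding the weight space of the tensor product as
$$
\dim V^{(N)}[N\mu] = \sum_{\substack{\eta_1,\ldots,\eta_{k-1}\in P\\ \eta_k := N\mu-\sum_{i<k}\eta_i}} \prod_{i=1}^k \dim V_{\nu_i^{(N)}}[\eta_i]
$$
and applying \eqref{eq2} to each factor, the normalized expression becomes a $(k-1)$-fold Riemann sum of mesh $1/N$ for the convolution integral at $\mu$, giving the claimed pointwise limit. Uniformity in $x$ follows from the fact that weights of $V^{(N)}$ lie in $\sum_i N\cdot\mathrm{conv}(W\nu_i^{(N)}/N)$, a subset of $\h^*_\R$ of diameter bounded independently of $N$; hence only finitely many Fourier modes $\mu\in P$ contribute to $\chi_N$, uniformly in $N$.

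Once uniform convergence is established, orthogonality of irreducible characters on $T$ allows one to pass the limit through the projection onto $\chi_\lambda$, yielding
$$
\mathrm{mult}(\chi_\lambda,P) = C^{-1}\lim_{N\to\infty}\frac{\mathrm{mult}(\chi_\lambda,\chi_N)}{N^s} \ge 0,
$$
a limit of nonnegative rationals. The main technical obstacle is the Riemann-sum step: it requires a version of \eqref{eq2} with uniform convergence $\dim V_{\nu_N}[\eta]/N^{|R_+|}\to DH_\lambda(\eta/N)$ as $\eta/N$ varies over compact sets and $\nu_N/N\to\lambda$, together with a polynomial pointwise upper bound on weight multiplicities to justify dominated convergence on the constrained lattice $\{(\eta_1,\ldots,\eta_k)\in P^k : \sum_i\eta_i = N\mu\}$. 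Both uniform statements are standard in Duistermaat-Heckman / Heckman polytope theory (cf.\ \cite{GLS}), and are available under the running hypotheses that $\lambda_i$ is regular and $G\ne SL_2$ (which guarantee continuity of the densities $DH_{\lambda_i}$).
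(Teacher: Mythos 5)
Your proposal is correct and follows essentially the same route as the paper: apply Proposition \ref{p1} with divisibility $N$ to a tensor product of Weyl modules approximating $N\lambda_1,\dots,N\lambda_k$ and pass to the limit using \eqref{eq2}, which is exactly the paper's (much terser) argument. The only differences are cosmetic --- you fold the paper's separate step for irrational $\lambda_i$ (rational case first, then density of rational weights plus continuity of $DH_\lambda(\mu)$ in $\lambda$) into the choice of the approximating sequence $\nu_i^{(N)}$, and you spell out the Riemann-sum, uniform-convergence, and orthogonality details that the paper leaves implicit.
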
 

\begin{proof}  First assume that $\lambda_i$ are rational, and let $d$ be their common denominator. 
Then, taking the limit as $N\to \infty$ in Proposition \ref{p1} with 
$V=V_{N\lambda_1}\otimes...\otimes V_{N\lambda_k}$ and $N$ divisible by $d$, we
obtain the desired statement. Now the general case follows from the facts that rational weights are dense 
in $\h_{\Bbb R}$ and $DH_\lambda(\mu)$ is continuous in $\lambda$. 
\end{proof} 

Now Theorem \ref{maint} follows from equation \eqref{eq1} and Proposition \ref{p2} by taking 
$\lambda_1,...,\lambda_k=\rho$ and noting that by the Weyl denominator formula 
\begin{equation}\label{eq3}
\frac{\sum_{w\in W}(-1)^we^{2\pi i(w\rho,x)}}{\prod_{\alpha>0}2\pi i\alpha(x)}=\prod_{\alpha>0}\frac{\sin \pi \alpha(x)}{\pi \alpha(x)}=f(x).
\end{equation} 
The rationality of the coefficients follows from the rationality of the values of the convolution power $(DH_\rho)^{*k}$ at rational points. 

\begin{remark} It follows from \eqref{eq3} that the measure $DH_\rho$ is the convolution of uniform measures on $[-\alpha/2,\alpha/2]$ over all positive roots $\alpha$.  
\end{remark} 

\subsection{The characters occuring in $F_{k,\xi}$.} 

Let us now discuss which irreducible characters can occur in the decomposition of $F_{k,\xi}$. 
Let us view $\xi$ as an element of $P/Q$. 
Clearly, if $\chi_\lambda$ occurs in $F_{k,\xi}$ then the central character of the representation 
$V_\lambda$ must be $\xi=k\rho-\lambda$ mod $Q$. 
If so, then, as shown above, the multiplicity of $\chi_\lambda$ in $F_{k,\xi}$ 
is $(DH_\rho)^{*k}(\lambda)$. Since this density is continuous and supported on the $k$ times dilated convex hull $B$ of the orbit $W\rho$, we see that if $\chi_\lambda$ occurs then $\lambda$ has to be strictly in the interior of $B$. 

Let $m_i(\xi)$ be the smallest strictly positive number such 
that $m_i(\xi)=(\xi,\omega_i^\vee)$ in $\Bbb R/\Bbb Z$, where $\omega_i^\vee$ are 
the fundamental coweights, and let 
$\beta_\xi:=\sum_i m_i(\xi)\alpha_i\in P$. 
Then we get 

\begin{proposition}\label{p3} The character $\chi_\lambda$ occurs in $F_{k,\xi}$  if and only if 
$\xi=k\rho-\lambda$ mod $Q$ and 
$(\lambda,\omega_i^\vee)<k(\rho,\omega_i^\vee)$ for all $i$. Moreover, in  presence of the first condition, 
the second condition is equivalent to the inequality $\lambda\le k\rho-\beta_\xi$. 
\end{proposition}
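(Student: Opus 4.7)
The plan is to build on the discussion immediately preceding the proposition, where it has already been observed that the multiplicity of $\chi_\lambda$ in $F_{k,\xi}$ equals $(DH_\rho)^{*k}(\lambda)$ whenever the central character condition $\xi = k\rho - \lambda \bmod Q$ is satisfied, and that the continuous density $(DH_\rho)^{*k}$ is supported on $k$ times the convex hull $B$ of $W\rho$. Thus the ``if and only if'' reduces to two tasks: showing that $(DH_\rho)^{*k}(\lambda)$ is strictly positive \emph{precisely} on the interior of $kB$, and identifying that interior, for dominant $\lambda$, as the set cut out by the inequalities $(\lambda,\omega_i^\vee) < k(\rho,\omega_i^\vee)$.

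For the positivity I would invoke the remark at the end of Section 2.2: $DH_\rho$ is the convolution over positive roots $\alpha$ of the uniform measures on $[-\alpha/2,\alpha/2]$, so $(DH_\rho)^{*k}$ is the convolution of $k|R_+|$ such segment measures in $\h_\R^*$. Since the positive roots span $\h_\R^*$, one can extract a sub-convolution over a basis subfamily, which is uniform on a parallelepiped and hence has strictly positive density on its interior; convolving with the remaining nonnegative segment measures only extends the support to the full Minkowski sum $kB$, without destroying positivity on the interior. Thus $(DH_\rho)^{*k}$ is strictly positive on the interior of $kB$ and vanishes outside its closure.

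The interior description is then a polytopal point. The facets of $B$ adjacent to the dominant vertex $\rho$ have outward normals $\omega_i^\vee$: the functional $\mu\mapsto(\mu,\omega_i^\vee)$ attains its maximum on $W\rho$ at value $(\rho,\omega_i^\vee)$ with maximizing set $\langle s_j : j\ne i\rangle\cdot\rho$, and all remaining facets of $B$ are $W$-translates of these. For dominant $\lambda$, the dominant representative of $W\omega_i^\vee$ is $\omega_i^\vee$ itself, so $(\lambda,w\omega_i^\vee)\le(\lambda,\omega_i^\vee)$ for every $w\in W$; consequently the strict inequalities $(\lambda,\omega_i^\vee)<k(\rho,\omega_i^\vee)$ at the fundamental coweights alone imply the strict inequalities at all $W$-translates, which is exactly the condition for $\lambda$ to lie in the interior of $kB$.

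To match this with $\lambda\le k\rho-\beta_\xi$ in the dominance order, set $\eta:=k\rho-\lambda\in P$, which by the central character condition represents $\xi$ in $P/Q$. Writing $\eta=\sum_i c_i\alpha_i$ with $c_i\in\Q$ gives $c_i=(\eta,\omega_i^\vee)$, so the interior condition becomes $c_i>0$ for every $i$. Since $\eta\equiv\xi\bmod Q$ and $(Q,P^\vee)\subseteq\Z$, each $c_i$ lies in the coset $m_i(\xi)+\Z$; as $m_i(\xi)\in(0,1]$, the inequality $c_i>0$ forces $c_i\ge m_i(\xi)$. Hence $\eta-\beta_\xi=\sum_i(c_i-m_i(\xi))\alpha_i$ is a nonnegative integer combination of simple roots, i.e.\ $\lambda\le k\rho-\beta_\xi$. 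The substantive step is the positivity of the convolution density on the interior of its support; the facet identification and the dominance-order translation are combinatorial bookkeeping.
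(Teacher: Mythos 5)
Your proof is correct and follows exactly the route the paper intends: the paper states Proposition \ref{p3} with no explicit proof, treating it as an immediate consequence of the preceding identification of the multiplicity with $(DH_\rho)^{*k}(\lambda)$, the zonotope description of $DH_\rho$ from the remark, and the arithmetic of the $m_i(\xi)$. You have simply (and correctly) filled in the three details the paper leaves implicit --- strict positivity of the convolution density on the interior of its support, the facet description of $\mathrm{conv}(W\rho)$ via the $w\omega_i^\vee$, and the translation of $c_i>0$ into $c_i\ge m_i(\xi)$.
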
 

We also have 

\begin{proposition} The weight $\rho-\beta_\xi$ (and hence $k\rho-\beta_\xi$ for all $k\ge 1$) is dominant. 
\end{proposition}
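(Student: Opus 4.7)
The plan is to show $(\rho - \beta_\xi, \alpha_j^\vee) \geq 0$ for every simple coroot $\alpha_j^\vee$; since $(\rho, \alpha_j^\vee) = 1$, this reduces to the integer bound $(\beta_\xi, \alpha_j^\vee) \leq 1$. The claim that $k\rho - \beta_\xi$ is dominant for all $k \geq 1$ is then automatic, because $k\rho - \beta_\xi = (\rho - \beta_\xi) + (k-1)\rho$ is a sum of dominant weights.

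The first step is to understand $(\beta_\xi, \alpha_j^\vee)$. Expanding $\alpha_j^\vee = \sum_i (\alpha_i, \alpha_j^\vee) \omega_i^\vee$ in the basis $\{\omega_i^\vee\}$ dual to the simple roots, one computes
$$(\beta_\xi, \alpha_j^\vee) = 2 m_j(\xi) + \sum_{i \neq j} m_i(\xi)(\alpha_i, \alpha_j^\vee),$$
which is moreover congruent to $\sum_i (\xi,\omega_i^\vee)(\alpha_i,\alpha_j^\vee) = (\xi, \alpha_j^\vee)$ modulo $\mathbb{Z}$, using $m_i(\xi) \equiv (\xi, \omega_i^\vee) \pmod{\mathbb{Z}}$ and integrality of Cartan entries. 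Since $\xi \in P/Q$ pairs integrally with $\alpha_j^\vee \in Q^\vee$, we conclude $(\beta_\xi, \alpha_j^\vee) \in \mathbb{Z}$ (confirming $\beta_\xi \in P$). Now $m_i(\xi) > 0$ and the off-diagonal Cartan entries satisfy $(\alpha_i, \alpha_j^\vee) \leq 0$ for $i \neq j$, so the off-diagonal sum is nonpositive; combined with $m_j(\xi) \leq 1$, this gives the easy bound $(\beta_\xi, \alpha_j^\vee) \leq 2$.

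The decisive step, which I expect to be the main obstacle, is ruling out the equality case $(\beta_\xi, \alpha_j^\vee) = 2$. Such equality would force $m_j(\xi) = 1$ and $m_i(\xi)(\alpha_i, \alpha_j^\vee) = 0$ for every $i \neq j$; strict positivity of each $m_i(\xi)$ then forces $(\alpha_i, \alpha_j^\vee) = 0$ for all $i \neq j$, meaning the $j$-th node of the Dynkin diagram of $\g$ is isolated from every other node. This contradicts the simplicity of $\g$. Hence $(\beta_\xi, \alpha_j^\vee)$ is an integer strictly less than $2$, so at most $1$, completing the proof of dominance of $\rho - \beta_\xi$.
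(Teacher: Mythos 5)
Your proof is correct and follows essentially the same route as the paper: both expand $(\beta_\xi,\alpha_j^\vee)=2m_j+\sum_{i\ne j}m_i(\alpha_i,\alpha_j^\vee)$ and combine $0<m_i\le 1$ with the nonpositivity of the off-diagonal Cartan entries and the connectedness of the Dynkin diagram to force $(\beta_\xi,\alpha_j^\vee)<2$, then conclude by integrality. The only cosmetic differences are that you verify the integrality of $(\beta_\xi,\alpha_j^\vee)$ directly and isolate the equality case, whereas the paper asserts integrality of $\rho-\beta_\xi$ and folds strictness into a single inequality; note that both arguments quietly assume the distinguished node has a neighbor, i.e.\ rank at least $2$.
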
 

\begin{proof} We need to show that for all $i$ we have $(\rho-\beta_\xi,\alpha_i^\vee)\ge 0$. 
Since $\rho-\beta_\xi$ is integral, it suffices to show that $(\rho-\beta_\xi,\alpha_i^\vee)>-1$, i.e., 
$(\beta_\xi,\alpha_i^\vee)<2$. But $(\beta_\xi,\alpha_i^\vee)=2m_i+\sum_{j\ne i}m_j(\alpha_j,\alpha_i^\vee)<2$, since $0<m_j\le 1$ and $(\alpha_j,\alpha_i^\vee)\le 0$ for all $j\ne i$ and is strictly negative for some $j$. 
\end{proof} 

\begin{corollary} $F_{k,\xi}=\sum_{\mu\le k\rho-\beta_\xi}C_{k,\xi}(\mu)\chi_\mu$,
where $C_{k,\xi}(\mu)\in \Bbb Q_{>0}$. In particular, the leading term is a multiple of $\chi_{k\rho-\beta_\xi}$. 
\end{corollary}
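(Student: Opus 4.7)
The plan is to deduce the corollary by combining Proposition \ref{p3} with the preceding proposition on dominance of $\rho-\beta_\xi$, noting that Theorem \ref{maint} already supplies the expansion $F_{k,\xi}=\sum_\lambda C_{k,\xi}(\lambda)\chi_\lambda$ with nonnegative rational coefficients, and that rationality was explicitly recorded just after the proof of Theorem \ref{maint}. The one substantive piece of content left is the equivalence asserted in the last sentence of Proposition \ref{p3}: under the central-character constraint $\lambda\equiv k\rho-\xi\pmod Q$, the inequalities $(\lambda,\omega_i^\vee)<k(\rho,\omega_i^\vee)$ for all $i$ should match the single partial-order inequality $\lambda\le k\rho-\beta_\xi$.

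First I would observe that $\beta_\xi\equiv\xi\pmod Q$. Indeed, by construction $(\beta_\xi,\omega_i^\vee)=m_i(\xi)\equiv(\xi,\omega_i^\vee)\pmod{\Bbb Z}$ for every $i$, so $\beta_\xi-\xi$ pairs integrally with every fundamental coweight, which forces it into $Q$ by the perfect pairing between $P/Q$ and $P^\vee/Q^\vee$. Combined with the central-character condition this gives $k\rho-\beta_\xi-\lambda\in Q$; writing this element as $\sum_i n_i\alpha_i$ with $n_i\in\Bbb Z$ and pairing with $\omega_i^\vee$ yields $n_i=k(\rho,\omega_i^\vee)-(\lambda,\omega_i^\vee)-m_i(\xi)$. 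The dominance condition $\lambda\le k\rho-\beta_\xi$ demands $n_i\ge 0$ for every $i$, while Proposition \ref{p3} demands $n_i+m_i(\xi)>0$; since $m_i(\xi)\in(0,1]$ and $n_i\in\Bbb Z$, these two conditions are equivalent. This is the one delicate check — at the boundary $m_i(\xi)=1$ one still excludes $n_i=-1$ only because the Proposition \ref{p3} inequality is strict.

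For the final sentence of the corollary I would then check that $\lambda=k\rho-\beta_\xi$ itself satisfies Proposition \ref{p3}: the central-character condition is just the congruence $\beta_\xi\equiv\xi\pmod Q$ already shown, and $(k\rho-\beta_\xi,\omega_i^\vee)<k(\rho,\omega_i^\vee)$ reduces to $m_i(\xi)>0$, true by definition. Dominance of $k\rho-\beta_\xi$ is the content of the preceding proposition, so $\chi_{k\rho-\beta_\xi}$ does appear with a strictly positive coefficient, and no $\chi_\mu$ with $\mu\not\le k\rho-\beta_\xi$ can appear. I expect the main obstacle in presenting this cleanly is the strict-versus-non-strict bookkeeping across the reduction modulo $\Bbb Z$, which is exactly what selects $\beta_\xi$ rather than some nearby lattice element as the correct threshold.
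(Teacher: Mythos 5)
Your proposal is correct and follows exactly the route the paper intends: the corollary is stated without proof as an immediate consequence of Proposition \ref{p3}, the dominance of $\rho-\beta_\xi$, and the positivity/rationality of the multiplicities $(DH_\rho)^{*k}(\lambda)$ recorded after Theorem \ref{maint}. Your careful verification that $\beta_\xi\equiv\xi\pmod Q$ and that $n_i+m_i(\xi)>0$ with $n_i\in\Bbb Z$, $m_i(\xi)\in(0,1]$ is equivalent to $n_i\ge 0$ is precisely the (unwritten) content behind the last sentence of Proposition \ref{p3}, handled correctly including the boundary case $m_i(\xi)=1$.
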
 

{\bf Acknowledgements.} P.E. is grateful to J.-B. Zuber for turning his attention to Conjecture 1 of \cite{CZ}.

\end{document}